\newcommand{\mc}[1]{\ensuremath{\mathcal{#1}}}
\newcommand{\bR}{\ensuremath{\mathbb{R}}}
\newcommand{\cF}{\ensuremath{\mathcal{F}}}
\newcommand{\mute}[1]{}
\DeclareMathOperator{\Mod}{Map}
\DeclareMathOperator{\Aut}{Aut}
\DeclareMathOperator{\Ends}{Ends}
\newcommand{\EMod}{\ensuremath{\Mod^\pm}}
\newcommand{\Id}{\ensuremath{\mathrm{Id}}}
\newtheorem{theorem}{Theorem}[section]
\newtheorem{conjecture}[theorem]{Conjecture}
\newtheorem{lemma}[theorem]{Lemma}
\newtheorem{corollary}[theorem]{Corollary}
\theoremstyle{definition}
\newtheorem{definition}[theorem]{Definition}
\newtheorem{question}[theorem]{Question}
\theoremstyle{remark}
\newtheorem*{remark}{Remark}
\title{Big Flip Graphs and their Automorphism Groups}
\author[A. Bar-Natan, A. Goel, B. Halstead, P. Hamrick, S. Shenoy, R. Verma]{Assaf Bar-Natan, Advay Goel, Brendan Halstead, \\ 
Paul Hamrick, Sumedh Shenoy, and Rishi Verma}
\date{}
\begin{document}
\maketitle

\begin{abstract}
In this paper, we study the relationship between the mapping class group of an infinite-type surface and the simultaneous flip graph, a variant of the flip graph for infinite-type surfaces defined by Fossas and Parlier \cite{fossas-parlier}. We show that the extended mapping class group is isomorphic to a proper subgroup of the automorphism group of the flip graph, unlike in the finite-type case. This shows that Ivanov's metaconjecture, which states that any ``sufficiently rich" object associated to a finite-type surface has the extended mapping class group as its automorphism group, does not extend to simultaneous flip graphs of infinite-type surfaces.
\end{abstract}

\section{Introduction}
The \textit{mapping class group} $\Mod(S)$ of a surface $S$ is the group of orientation-preserving homeomorphisms of $S$ up to isotopy. Expanding $\Mod(S)$ to include orientation-reversing homeomorphisms yields the \emph{extended mapping class group} $\EMod(S)$. The \textit{flip graph} $\cF(S)$ of $S$ is the graph whose vertices are triangulations and whose edges are elementary moves, or \emph{flips}. This graph characterizes finite-type surfaces up to homeomorphism, and the automorphisms of the flip graph are precisely those induced by the action of the extended mapping class group \cite{korkmaz-popadopoulos}. This exemplifies a broader pattern described by the following ``metaconjecture" due to Ivanov.

\begin{conjecture}[Problem 6 of \cite{ivanov}]
Every object naturally associated to a surface and having sufficiently rich structure has the extended mapping class group as its group of automorphisms.
\end{conjecture}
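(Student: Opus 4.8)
The statement to be addressed is Ivanov's metaconjecture, which is informal by design: the phrase ``sufficiently rich structure'' carries no precise definition, so there is no single theorem to prove. My plan is therefore not to produce one proof but to describe the \emph{program} by which individual instances of the metaconjecture are verified, together with an honest account of why the statement resists a uniform argument.

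For a concrete combinatorial object $\cG$ associated to a surface $S$ — a curve complex, arc complex, pants graph, or the flip graph $\cF(S)$ — the standard Ivanov-style verification proceeds in four steps. First, I would identify the vertices of $\cG$ with topological data on $S$ (isotopy classes of curves, arcs, or triangulations) and the adjacency relation with a purely combinatorial condition (disjointness, or an elementary flip). Second, I would establish a local rigidity statement: any $\phi \in \Aut(\cG)$ must preserve finer invariants detectable from the graph alone, such as vertex valence, the structure of maximal cliques, or the combinatorial type of a vertex link. Third, I would use this rigidity to manufacture from $\phi$ an induced automorphism of a simpler, better-understood complex, most often the curve complex $\mc{C}(S)$. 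Fourth, I would invoke Ivanov's theorem that $\Aut(\mc{C}(S)) \cong \EMod(S)$ in finite type, so that the induced automorphism, and hence $\phi$ itself, is realized by a unique extended mapping class.

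The decisive and most delicate step is the third: translating combinatorial rigidity of $\cG$ into genuine topological control. This is exactly where ``sufficiently rich'' does its silent work, since the reconstruction succeeds only when the adjacency relation encodes enough of the intersection pattern of curves to recover $\mc{C}(S)$. I expect this reconstruction to be the main obstacle in any individual case, and there is no general principle telling us in advance which objects are rich enough; each instance has historically required its own tailored argument.

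Finally, I would stress that the metaconjecture as literally worded cannot be upgraded to a theorem in full generality, and the present paper supplies the reason: the simultaneous flip graph of an infinite-type surface is a natural and structurally rich object for which $\Aut$ strictly contains the image of $\EMod(S)$. The appropriate goal is thus not to prove the statement outright but to trace the boundary of its validity, so that the honest ``proof'' is a case-by-case verification on one side of that boundary accompanied by sharp counterexamples on the other.
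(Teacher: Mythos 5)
You have correctly identified that this statement is Ivanov's metaconjecture, which the paper quotes without any proof precisely because it is intentionally informal (``sufficiently rich'' is left undefined), and whose natural extension to infinite-type surfaces the paper's main result (Theorem \ref{thm:isomorphic-to-subgroup}) in fact refutes for the simultaneous flip graph. Your assessment --- that no uniform proof exists, that verification is necessarily case-by-case in the style of the curve-complex and arc-complex results the paper cites, and that the honest treatment pairs such verifications with counterexamples like this paper's --- matches the paper's own stance exactly.
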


Here, ``sufficiently rich" is intentionally left undefined. Ivanov's metaconjecture has been shown to hold for many objects in addition to the flip graph, such as the curve complex \cite{ivanov-curve-complex}, the arc complex \cite{irmak-mccarthy}, and Torelli buildings \cite{farb-ivanov}, and in 2019, the metaconjecture was shown to be true for an even larger class of complexes on finite-type surfaces \cite{brendle-margalit}.

For infinite-type surfaces, i.e., surfaces whose fundamental group is not finitely-generated, it is known that the automorphism group of the curve graph is the extended mapping class group \cites{bavard-dowdall-rafi, hernandez-morales-valdez}. Therefore, in this paper, motivated by Ivanov's metaconjecture and the above results, we study the same question in the setting of flip graphs on infinite-type surfaces. 
We will consider the simultaneous flip graph, a variant of the flip graph defined by Fossas and Parlier \cite{fossas-parlier}, which adapts the flip graph for infinite-type surfaces. 
We show that---unlike in the finite-type case---the symmetries of this graph are no longer fully captured by the extended mapping class group. In particular, we prove the following result.

\begin{theorem}\label{thm:isomorphic-to-subgroup}
The natural action of the extended mapping class group on the flip graph is an isomorphism from $\EMod(\Sigma)$ to a proper subgroup of $\Aut(\cF(\Sigma))$.
\end{theorem}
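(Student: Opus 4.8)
The statement bundles two claims that I would establish separately: that the natural map $\rho\colon \EMod(\Sigma)\to\Aut(\cF(\Sigma))$ is an injective homomorphism, and that its image is \emph{proper}. The first is a fairly routine transplant of finite-type rigidity techniques, whereas the second is where the genuinely infinite-type phenomenon lives, so I would spend almost all of my effort there.

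For injectivity, I would first confirm $\rho$ is well defined: any homeomorphism $f$ (orientation-preserving or not) sends a triangulation to a triangulation and a simultaneous flip to a simultaneous flip, hence sends edges of $\cF(\Sigma)$ to edges and descends to a graph automorphism depending only on the isotopy class $[f]$. To see the kernel is trivial, suppose $[f]$ fixes every vertex of $\cF(\Sigma)$. Then each triangulation $T$ is preserved as a \emph{set} of arcs, but a priori $f$ might permute the arcs inside $T$. To rule this out I would also use that $f$ fixes every triangulation $T_b$ obtained from $T$ by flipping a single flippable edge $b$: the arc sets of $T$ and $T_b$ are $A\cup\{b\}$ and $A\cup\{b'\}$ with common part $A$, and since $f$ permutes each set, every $c\in A$ must have $f(c)\in (A\cup\{b\})\cap(A\cup\{b'\})=A$, which then forces $f(b)=b$ as well. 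Letting $b$ range shows $f$ fixes the isotopy class of every arc of a filling triangulation, and the Alexander method for infinite-type surfaces then gives $f\simeq\Id$.

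The heart of the theorem is properness, for which I would produce an explicit $\Phi\in\Aut(\cF(\Sigma))$ not of the form $\rho([f])$. The guiding heuristic is that the \emph{simultaneous} flip graph is strictly coarser than the classical flip graph: a single edge of $\cF(\Sigma)$ may encode the simultaneous flip of infinitely many disjoint edges, so the adjacency relation forgets exactly \emph{which} admissible set of edges was flipped. This loss of information is precisely what should create symmetries no homeomorphism can see. Accordingly, I would isolate a distinguished family of triangulations whose simultaneous-flip neighborhoods coincide in the graph even though the triangulations are not interchanged by any homeomorphism, define $\Phi$ by its prescribed effect on this family, and extend it through the flip relation. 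I would then verify two things in tandem: that $\Phi$ is a bijection preserving adjacency, and that $\Phi$ disagrees with every $\rho([f])$ on at least one triangulation, the latter detected by a topological invariant (a local orientation datum, or a permutation of the ends) that any $\rho([f])$ is forced to respect.

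The main obstacle is the tension internal to this construction. The map $\Phi$ must be symmetric enough to preserve \emph{every} adjacency of $\cF(\Sigma)$, and because adjacency includes infinite simultaneous flips this verification cannot be reduced to finitely many local checks; yet $\Phi$ must simultaneously be asymmetric enough to fall outside the image of $\rho$. Confirming that the candidate respects adjacencies realized by infinite flips, and not merely the single-flip moves, is where I expect the real difficulty to concentrate; once adjacency-preservation is secured, non-geometricity should follow from a comparatively soft argument exhibiting an invariant that $\Phi$ violates but that every geometric automorphism preserves.
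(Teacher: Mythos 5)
Your decomposition into injectivity plus properness matches the paper's, and your injectivity sketch is workable in outline, but note two caveats there. First, fixing a triangulation as a set of isotopy classes does not pin down individual arcs --- the paper even remarks that a shift of the flute surface fixes the standard triangulation without being isotopic to the identity --- so your flip-comparison trick is doing real work; it must be checked that every arc is flippable in \emph{some} triangulation, and the paper instead routes through a lemma (Lemma \ref{complete-one-but-not-other}) producing, for any $\alpha\neq\beta$, triangulations containing $\alpha$ but not $\beta$. Second, the Alexander method of \cite{alex-ref-pls} is stated for curves, not ideal arcs, so "fixes every arc $\Rightarrow$ isotopic to the identity" is itself a statement requiring proof; the paper supplies this as Lemma \ref{lemma:fix-arc-fix-curve-alex-ref-pls}, converting fixed loop-arcs at an end into fixed curves. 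These are fillable gaps, but they are gaps.

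The genuine problem is properness, which you correctly identify as the heart of the theorem and then do not prove. Your plan --- find triangulations with coinciding simultaneous-flip neighborhoods that are not homeomorphism-related, swap them, and verify adjacency-preservation across all infinite flips --- is left entirely unexecuted, and you yourself flag the adjacency verification as the point where "the real difficulty" concentrates. No candidate family is exhibited, no invariant is named, and there is no argument that such coinciding neighborhoods exist at all. Moreover, the plan misses the structural fact that makes the theorem true and easy to exploit: $\cF(\Sigma)$ is \emph{disconnected}, with uncountably many components (Lemma \ref{lemma:finite-int-same-cc}). The paper's automorphism is simply: take a nontrivial compactly-supported mapping class $f$, observe it preserves every connected component (Lemma \ref{lemma:comp-supp-aut}), and define $\overline{f}$ to agree with $f$ on one component and with the identity elsewhere. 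Adjacency-preservation is then automatic --- on each component $\overline{f}$ coincides with an automorphism induced by a mapping class --- so the difficulty you anticipate never arises. Non-geometricity follows not from a "local orientation datum" but from the strengthened injectivity statement (Lemma \ref{lemma:no-map-one-comp-nontrivial}): any mapping class acting trivially on all but one component is trivial, whereas $\overline{f}$ is nontrivial on exactly one component. As written, your proposal establishes only the routine half of the theorem.
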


To prove this, we first show that the isotopy class of a homeomorphism is determined by its action on $\cF(\Sigma)$. For other simplicial complexes on which $\EMod(\Sigma)$ acts, such as the curve complex and the pants complex, this can be accomplished using the Alexander Method for infinite-type surfaces developed by Hernandez et al. in \cite{alex-ref-pls}. To apply this result to ideal arcs, we prove the following lemma, which may be of interest by its own merit.
\begin{restatable*}[Alexander Method for Ideal Arcs]{lemma}{alex}
\label{lemma:fix-arc-fix-curve-alex-ref-pls}
Let $\Sigma$ be an infinite-type surface, and let $f$ be a homeomorphism of $\Sigma$. For $e \in \Ends(\Sigma)$, let $\mathcal{L}_e$ denote the collection of ideal arcs that are ``loops at $e$" --- those which map both ends of $(0, 1)$ to $e$. The following are equivalent.
\begin{enumerate}
    \item[(i)] There exists $e \in \Ends(\Sigma)$ such that for all $\lambda \in \mathcal{L}_e$, the ideal arc $f(\lambda)$ is isotopic to $\lambda$.
    \item[(ii)] The homeomorphism $f$ is isotopic to the identity homeomorphism $\Id_\Sigma$.
\end{enumerate}
\end{restatable*}

Using the terminology of \cite{alex-ref-pls}, this means that for any end $e$, the set $\mathcal{L}_e$ forms an Alexander system for $\Sigma$. This also establishes that the mapping class group acts faithfully on the arc graphs discussed in \cite{aramayona-fossas-parlier}.

\subsection*{Acknowledgements}
We would like to thank the Canada/USA Mathcamp for 
providing a welcoming space and setting to meet, discuss, and develop the ideas in this paper. We would also like to thank Kasra Rafi, Hugo Parlier, and Alan McLeay for their support and helpful conversations. Finally, we would like to thank Mark Bell for telling us about the Python package ``Bigger'' and talking to us about the subtleties of triangulations on infinite-type surfaces.
\section{Preliminaries}

This section is an introduction to the concepts and terminology regarding infinite-type surfaces that will be used throughout the paper.
A \emph{surface} is a connected, orientable topological 2-manifold. A surface $\Sigma$ is called \emph{finite-type} if $\pi_1(\Sigma)$ is finitely-generated. Otherwise, $\Sigma$ is called \emph{infinite-type}.

The \emph{end space} of an infinite-type surface $\Sigma$, introduced by Freudenthal \cite{freudenthal}, and denoted by $\Ends(\Sigma)$, is $\varprojlim \pi_0(\Sigma \setminus K)$, where $K$ ranges over all compact subsurfaces of $\Sigma$ and the discrete spaces $\pi_0(\Sigma \setminus K)$ are ordered by inclusion. We equip $\Ends(\Sigma)$ with the inverse limit topology, and note that it is totally disconnected, compact, and metrizable. The 
set $\bar\Sigma := \Sigma \sqcup \Ends(\Sigma)$, equipped with an appropriate topology, is called the \textit{Freudenthal compactification} of the surface $\Sigma$ \cite{freudenthal}. Concretely, an end can be thought of as a descending chain $e = U_1 \supset U_2 \supset \cdots$ of connected open sets such that
$\cap_i U_i = \emptyset$. If there exists $n$ such that $U_n$ can be embedded in $\bR^2$, then $e$ is called \emph{planar}. Otherwise, it is said to be \emph{non-planar} or \emph{accumulated by genus}. We denote by $\Ends_g(\Sigma)$ the subspace of non-planar ends. See \cite{fanoni-ghaswala-mcleay} for a more detailed exposition. 

In 1963, Richards showed that orientable infinite-type surfaces are classified up to homeomorphism by their genus and the pair of spaces $(\Ends(\Sigma), \Ends_g(\Sigma))$. For more information, see \cite{richards}. 

A \emph{simple closed curve} is an embedding of the circle $S^1$ into $\Sigma$. An \emph{ideal arc}, or simply an \emph{arc}, on a surface $\Sigma$ is an embedding $\alpha:(0,1) \to \Sigma$ which extends continuously to a map $\bar\alpha: [0,1] \to \bar\Sigma$  such that the ends of $(0,1)$ are mapped to ends of $\Sigma$. A \emph{multi-arc} is a collection of disjoint arcs.
An arc $\alpha$ in $\Sigma$ is called \emph{peripheral} if $\Sigma \setminus \alpha$ has a connected component that is a disk. An arc is called \emph{essential} if it is not 
peripheral. A \textit{multi-arc} 
is a possibly-infinite collection of disjoint arcs, and an isotopy class of a multi-arc is the collection of isotopy classes of its corresponding disjoint arcs.

Let $[\mu]$ and $[\nu]$ be isotopy classes of multi-arcs. The \emph{intersection number} 
$i(\mu, \nu)$ is the minimum value of $|\mu \cap \nu|$ among all isotopy representatives of $\mu$ and $\nu$.

\begin{definition}
An \emph{ideal triangulation} of a surface, denoted $T$, is a locally-finite maximal collection pairwise disjoint, pairwise non-homotopic, essential ideal arcs whose complementary components are open disks. We also require that $T$ have a tubular neighbourhood homotopic to itself.
\end{definition}

There are other alternative definitions of ideal triangulations on infinite-type surfaces in the literature, such as in \cites{fossas-parlier, mcleay-parlier}. We will sometimes consider a triangulation of $\Sigma$ to be a subspace of $\Sigma$ rather than a collection of maps into $\Sigma$. The closures of complementary components of a triangulation are called $\emph{triangles}$. It follows from maximality that the boundary of a triangle can consist of either two or three arcs. 

If an arc $\alpha$ in a triangulation bounds only one triangle $\Delta$, then $\partial \Delta$ must consist of a single additional arc $\beta$ \cite{korkmaz-popadopoulos}. In this situation we call $\beta$ a \emph{petal arc for $\alpha$}. 

In \cite{mcleay-parlier}, McLeay and Parlier establish that every infinite-type surface admits an ideal triangulation. Moreover, they determine when exactly a multiarc is a subset of a triangulation:

\begin{lemma}[Theorem 1.2 of \cite{mcleay-parlier}]\label{lemma:complete-multiarc-to-tri}
An ideal multiarc $\mu$ is a subset of an ideal triangulation if and only if for any simple closed curve $\gamma$, the intersection number $i(\gamma, \mu) < \infty$.
\end{lemma}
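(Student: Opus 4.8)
The plan is to prove the two implications separately. The forward direction is a short compactness argument, while the reverse direction, which actually produces a triangulation containing $\mu$, is the substantive one.

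For the forward implication, suppose $\mu \subseteq T$ for some ideal triangulation $T$, and fix a simple closed curve $\gamma$. Realize $\gamma$ transverse to the one-complex $T$. Since $T$ is locally finite and the image of $\gamma$ is compact, the set $\gamma \cap T$ is a compact $0$-manifold, hence finite; in particular $\gamma$ meets only finitely many arcs of $T$, each finitely often. Writing $i(\gamma,\nu) = \sum_{\alpha \in \nu} i(\gamma,\alpha)$ as in the paper's convention, the bound $i(\gamma,\alpha) \le |\gamma \cap \alpha|$ for each arc gives $i(\gamma,\mu) \le i(\gamma,T) \le |\gamma \cap T| < \infty$, using $\mu \subseteq T$ for the first inequality. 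This direction uses nothing about $\mu$ beyond membership in a locally finite triangulation.

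For the reverse implication, fix a complete hyperbolic metric on $\Sigma$ chosen so that every essential arc and every essential simple closed curve admits a geodesic representative, with distinct geodesics automatically in pairwise minimal position; replace the arcs of $\mu$ and any test curve by their geodesic representatives throughout. The argument then has two steps. In \textbf{Step 1 (local finiteness)} I would show that the hypothesis forces the geodesic realization of $\mu$ to accumulate only on $\Ends(\Sigma)$, i.e. to be locally finite in the interior of $\Sigma$. If this failed, infinitely many distinct geodesic arcs of $\mu$ would accumulate at some interior point $p$; running a short geodesic segment through $p$ transverse to this accumulation and closing it up to a simple closed geodesic $\gamma$, each of those arcs contributes a crossing that is essential precisely because the representatives are geodesics, whence $i(\gamma,\mu) = \infty$, contradicting the hypothesis. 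In \textbf{Step 2 (completion)} I would cut $\Sigma$ along $\mu$ to obtain a surface $\hat\Sigma$ whose boundary is made up of the doubled arcs of $\mu$; each component of $\hat\Sigma$ is a surface with ideal boundary, and by the existence of ideal triangulations applied componentwise (relative to the boundary) one selects inside each component a maximal locally finite family of disjoint essential arcs whose complementary regions are disks. Reassembling, $\mu$ together with all added arcs is a locally finite maximal collection of disjoint essential arcs with disk complementary components, and hence an ideal triangulation containing $\mu$.

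The main obstacle is Step 1: converting the purely combinatorial finiteness hypothesis into genuine topological local finiteness, and in particular guaranteeing that the crossings produced by the obstructing curve $\gamma$ survive in minimal position rather than being removable by isotopy. The geodesic realization is the device that makes this controllable, and it is also what one needs in Step 2 to ensure the reassembled family does not develop new interior accumulation along $\mu$ and genuinely satisfies the tubular-neighbourhood (local finiteness) condition required of a triangulation. Once local finiteness is secured, the completion step is a standard maximality argument: any maximal collection of disjoint essential arcs in a surface with ideal boundary cuts it into disks, so the only real work is the accumulation control of Step 1.
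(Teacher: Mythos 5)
Note first that the paper you are comparing against does not prove this lemma at all: it is imported verbatim as Theorem 1.2 of \cite{mcleay-parlier}, so your attempt can only be measured against that paper's argument, not anything in this one. Your forward direction is fine (compactness of $\gamma$ plus local finiteness of $T$). The genuine gap is in your Step 1. You cannot ``close up a short geodesic segment through $p$ to a simple closed geodesic'': a closed geodesic is determined by its free homotopy class, and there is no way to prescribe that it pass through the accumulation point transverse to the accumulating family. If you instead close the transversal by an arbitrary embedded path, the resulting curve $\gamma$ is not geodesic, and your one-line justification that the crossings ``are essential precisely because the representatives are geodesics'' evaporates: minimal position for $\gamma$ is achieved by its geodesic representative, which may miss the accumulation region entirely, and the closing path may itself hit the arcs $\alpha_n$ again, changing parities. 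What is needed is an actual invariance argument --- for instance, the mod-$2$ intersection number of a loop with a properly embedded arc is a homotopy invariant of the loop, so it suffices to produce a simple closed curve meeting infinitely many of the $\alpha_n$ an odd number of times; arranging that parity (say by confining the closing path to one side of a product neighbourhood of the nearly parallel geodesic strands near $p$, or by a closed transversal to the limiting lamination) is precisely the missing content. Separately, on an infinite-type surface an arbitrary complete hyperbolic metric need not realize every ideal arc by a geodesic, nor is it automatic that disjoint arcs have disjoint geodesic representatives in such a structure; the choice of metric and these realization statements have to be established, not stipulated.

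Your Step 2 also hides the theorem's real difficulty behind the phrase ``standard maximality argument.'' Cutting $\Sigma$ along the (infinite, locally finite) multiarc $\mu$ can produce complementary pieces that are themselves infinite-type surfaces with infinitely many ideal boundary arcs, and the existence of an ideal triangulation \emph{relative} to prescribed boundary arcs is not a formal consequence of the absolute case --- it is essentially the main existence theorem of \cite{mcleay-parlier} again. Moreover, Zorn-type maximality does not interact well with local finiteness (an increasing union of locally finite arc families need not be locally finite, so a ``maximal locally finite family'' is not produced by a routine maximality argument), you must check that the arcs added in the pieces do not accumulate onto the arcs of $\mu$ after reassembly, and the paper's definition of triangulation additionally imposes the tubular-neighbourhood condition, which you acknowledge only in passing. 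In short, your architecture --- curve condition $\Leftrightarrow$ local finiteness of a geodesic realization, then relative completion --- is the right shape, but both halves of the reverse implication are asserted exactly at the points where the actual work of McLeay--Parlier's proof lies.
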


If a multiarc $\mu$ satisfies the conditions above, then there exists a triangulation $T$ containing $\mu$. We call such a triangulation a \emph{completion} of $\mu$. 

\begin{definition}
Consider a triangulation $T$ and a disjoint collection of arcs $\mu \subset T$ such that for any $\alpha 
\in \mu$ bounding distinct triangles $\Delta_1$ and $\Delta_2$, we have $\Delta_1\cap \Delta_2 \cap \mu = \alpha$. 
A \emph{simultaneous flip} sends $T$ to a triangulation $T'$ by replacing each $\alpha \in \mu$ by the other diagonal of its bounding quadrilateral.
\end{definition}

Figure \ref{fig:sim-flip-example} depicts an example of a simultaneous flip. We can now define the \emph{flip graph} $\cF(\Sigma)$ of a surface $\Sigma$, whose vertices are isotopy classes of ideal triangulations of $\Sigma$. If $\Sigma$ is an infinite-type surface, we connect two triangulations by an edge if they are related by a simultaneous flip. If $\Sigma$ is finite-type, we connect triangulations by an edge only if they are related by a single flip.

\begin{figure}[htbp]
    \centering
    \includegraphics[width=0.3\textwidth]{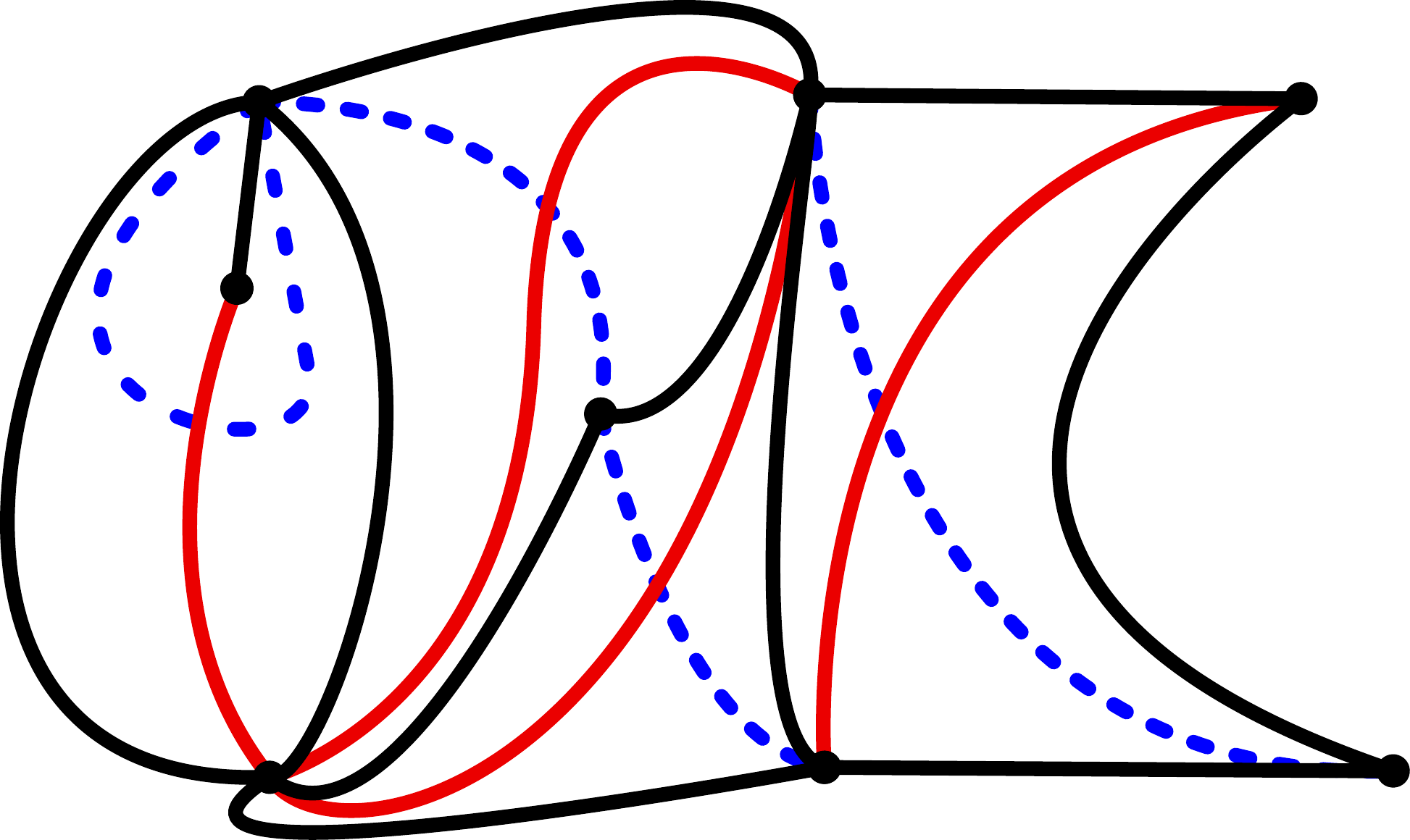}
    \caption{An example of a simultaneous flip of a triangulation, with the dotted blue arcs before the flip and the red arcs afterward.}
    \label{fig:sim-flip-example}
\end{figure}

Although the flip graph is connected in the finite-type case \cite{disarlo-parlier}, the flip graph in the infinite-type case has uncountably many connected components. In particular, \cite{fossas-parlier} establishes conditions on when two triangulations are in the same connected component.

\begin{lemma}[Theorem 1.1 of \cite{fossas-parlier}]\label{lemma:finite-int-same-cc}
    Let $\Sigma$ be an infinite-type surface. Let $S$ and $T$ be triangulations of $\Sigma$. Then, $S$ and $T$ are in the same connected component of $\cF(\Sigma)$ if and only if there exists $K \geq 0$ such that for every arc $\alpha$ of $S$ and every arc $\beta$ of $T$, the intersection numbers $i(\alpha, T)$ and $i(\alpha, S)$ are bounded by $K$.
\end{lemma}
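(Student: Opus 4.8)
The plan is to prove the two implications separately, treating
$d(S,T) := \max\bigl(\sup_{\alpha\in S} i(\alpha,T),\ \sup_{\beta\in T} i(\beta,S)\bigr)$
as a coarse measurement of how far apart $S$ and $T$ are, and showing that it is finite exactly when $S$ and $T$ lie in the same component of $\cF(\Sigma)$. (I read the stated condition as asserting precisely that this quantity is bounded by $K$.)

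\emph{Necessity.} The heart of this direction is a bounded-distortion estimate for a single flip. If a flip replaces a diagonal $\alpha$ of a quadrilateral with sides $s_1,\dots,s_4$ by the other diagonal $\alpha'$, then I claim that for any triangulation $V$,
\[
 i(\alpha',V)\ \le\ i(\alpha,V) + \textstyle\sum_{j=1}^4 i(s_j,V) + c_0
\]
for a universal constant $c_0$: realizing $\alpha'$ inside the quadrilateral, every crossing of $\alpha'$ with $V$ is forced either by a strand of $V$ crossing the boundary of the quadrilateral or by the single unavoidable crossing of $\alpha'$ with $\alpha$. Since a simultaneous flip only substitutes arcs by diagonals of their own quadrilaterals, this yields $\sup_{\eta\in U'} i(\eta,V)\le 5\,\sup_{\eta\in U} i(\eta,V)+c_0$ whenever $U\to U'$ is one simultaneous flip. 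Given a finite edge-path $S=U_0\to\cdots\to U_n=T$, set $V=S$: because $\sup_{\eta\in U_0} i(\eta,S)=0$, iterating the estimate $n$ times bounds $\sup_{\eta\in T} i(\eta,S)$ by a finite number depending only on $n$ and $c_0$. Running the same (reversible) path with $V=T$ bounds $\sup_{\eta\in S} i(\eta,T)$, so $d(S,T)<\infty$.

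\emph{Sufficiency.} Conversely, suppose $d(S,T)\le K$; I would build a finite path by induction on $K$, after placing $S$ and $T$ in minimal position so that no bigons remain between them. If $K=0$, then every arc of $S$ is disjoint from $T$, and since $T$ is a maximal collection of arcs, each arc of $S$ must be isotopic to an arc of $T$ and conversely, so $S=T$ and the flip-distance is $0$. If $K>0$, the uniform bound forces the complementary regions of $S\cup T$ to have uniformly bounded combinatorial type, so only \emph{finitely many} local crossing patterns occur. Mirroring the finite-type reduction, I would locate an ``innermost'' crossing of an arc of $S$ with $T$ that yields a flip of that arc strictly lowering its intersection with $T$, collect a maximal family of such reducing flips that is pairwise disjoint and pairwise independent in the sense of the definition of a simultaneous flip (i.e.\ satisfying $\Delta_1\cap\Delta_2\cap\mu=\alpha$), and perform them all in a single simultaneous move.

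The main obstacle is \emph{termination together with coordination} across infinitely many simultaneous flips, which is exactly where the infinite-type setting departs from the classical flip graph. A single simultaneous flip can lower some intersection numbers while threatening to raise others, so I must exhibit a well-founded complexity that strictly decreases at every step. Because the complementary regions realize only finitely many combinatorial types, a natural candidate orders configurations by their finitely many top crossing types and flips, in one legal simultaneous move, every arc participating in a worst type; I would then argue that the number of distinct worst types strictly drops, that the chosen multiarc is legal and never pushes $\sup_{\alpha\in S} i(\alpha,T)$ above $K$, and hence that the process halts in a number of rounds controlled by $K$. Verifying that such a reduction can genuinely be carried out uniformly over all the (infinitely many) crossings at once, while remaining a single independent multiarc and strictly decreasing a finite potential, is the crux of the argument.
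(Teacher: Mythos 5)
You are attempting to prove a statement that the paper itself does not prove: Lemma \ref{lemma:finite-int-same-cc} is imported verbatim as Theorem 1.1 of \cite{fossas-parlier}, so there is no in-paper proof to compare against, and your attempt must be judged on its own merits. On those merits, your \emph{necessity} direction is essentially sound. The per-flip estimate $i(\alpha',V)\le i(\alpha,V)+\sum_j i(s_j,V)+c_0$ is correct in spirit (the additive constant absorbs the possibility of an arc of $V$ running vertex-to-vertex inside the quadrilateral, i.e.\ parallel to the old diagonal), and you correctly exploit the independence condition $\Delta_1\cap\Delta_2\cap\mu=\alpha$ in the definition of a simultaneous flip: it guarantees that the four sides of each flipped quadrilateral are unflipped arcs of $U$, so the bound $\sup_{\eta\in U'} i(\eta,V)\le 5\sup_{\eta\in U} i(\eta,V)+c_0$ holds uniformly even when infinitely many arcs are flipped at once. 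Iterating along a finite edge-path with $V=S$ and then $V=T$ gives finiteness of your $d(S,T)$, which is a faithful reading of the stated condition (the paper's ``$i(\alpha,S)$'' is evidently a typo for $i(\beta,S)$, and you silently correct it).

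The \emph{sufficiency} direction, however, contains a genuine gap, and it is exactly the one you name yourself: the coordination-and-termination argument for infinitely many simultaneous flips is asserted as a plan, not proved. Concretely, your proposed potential --- ``the number of distinct worst crossing types'' --- is not shown to decrease. To flip \emph{every} instance of a worst type in one move you would need all those flips to form a single legal multiarc, but instances of the same type can occur at quadrilaterals sharing a triangle, where the condition $\Delta_1\cap\Delta_2\cap\mu=\alpha$ forces you to omit some of them; the omitted instances keep the worst type alive after the move, and nothing in your sketch rules out flips creating fresh instances of that same type elsewhere, so the rounds need not terminate. Moreover, ``only finitely many combinatorial types occur'' does not by itself yield a well-founded complexity, since each type may occur infinitely often and your potential counts types rather than controlling instances. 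Finally, your claim that the chosen multiarc ``never pushes $\sup_{\alpha\in S} i(\alpha,T)$ above $K$'' is unsubstantiated --- a reducing flip for one arc can increase the intersection number of a neighboring arc with $T$. Since this direction is the entire content of the cited theorem (your necessity argument is the routine half), the proposal as it stands is an honest outline with the central lemma missing, not a proof; filling it requires the kind of global reduction scheme developed in \cite{fossas-parlier}.
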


Note that there is a natural action of the mapping class group on the flip graph sending an ideal triangulation to its image under the homeomorphism of the surface. 
Since this action commutes with flips, each mapping class induces an automorphism of the flip graph. In the next section, we show that there exists an automorphism of the flip graph which is not induced by any mapping class.

\section{Proof of Theorem \ref{thm:isomorphic-to-subgroup}}
In this section, we show that the extended mapping class group 
is a strict subgroup of the automorphism group of the flip graph for infinite-type surfaces. Our methods were first inspired by those used by Branman to show an analogous result for pants graphs \cite{branman}.

Since the flip graph is disconnected, the connected components can in some sense be manipulated independently of one another. However, every mapping class necessarily affects triangulations in every connected component of the flip graph. This is the essential reason that the symmetries of $\Sigma$ fail to fully capture the symmetries of $\cF(\Sigma)$. After establishing some properties of the action on $\cF(\Sigma)$, we will prove this by providing an automorphism of $\cF(\Sigma)$ that cannot be induced by a mapping class.

We begin by proving that mapping classes are determined by their action on certain arcs.
\alex
\begin{proof}
Assuming (ii), we see that restricting the isotopy from $\Id_\Sigma$ to $f$ yields an isotopy from $\lambda$ to $f(\lambda)$. Hence, (ii) implies (i). To show the converse, we claim that any mapping class satisfying (i) must fix every isotopy class of curves on $\Sigma$. Then we can use the Alexander Method of \cite{alex-ref-pls} to conclude that $f$ must be isotopic to $\Id_\Sigma$.

\begin{figure}[ht]
    \centering
    \begin{subfigure}[t]{0.25\textwidth}
        \centering
        \includegraphics[width=0.9\textwidth]{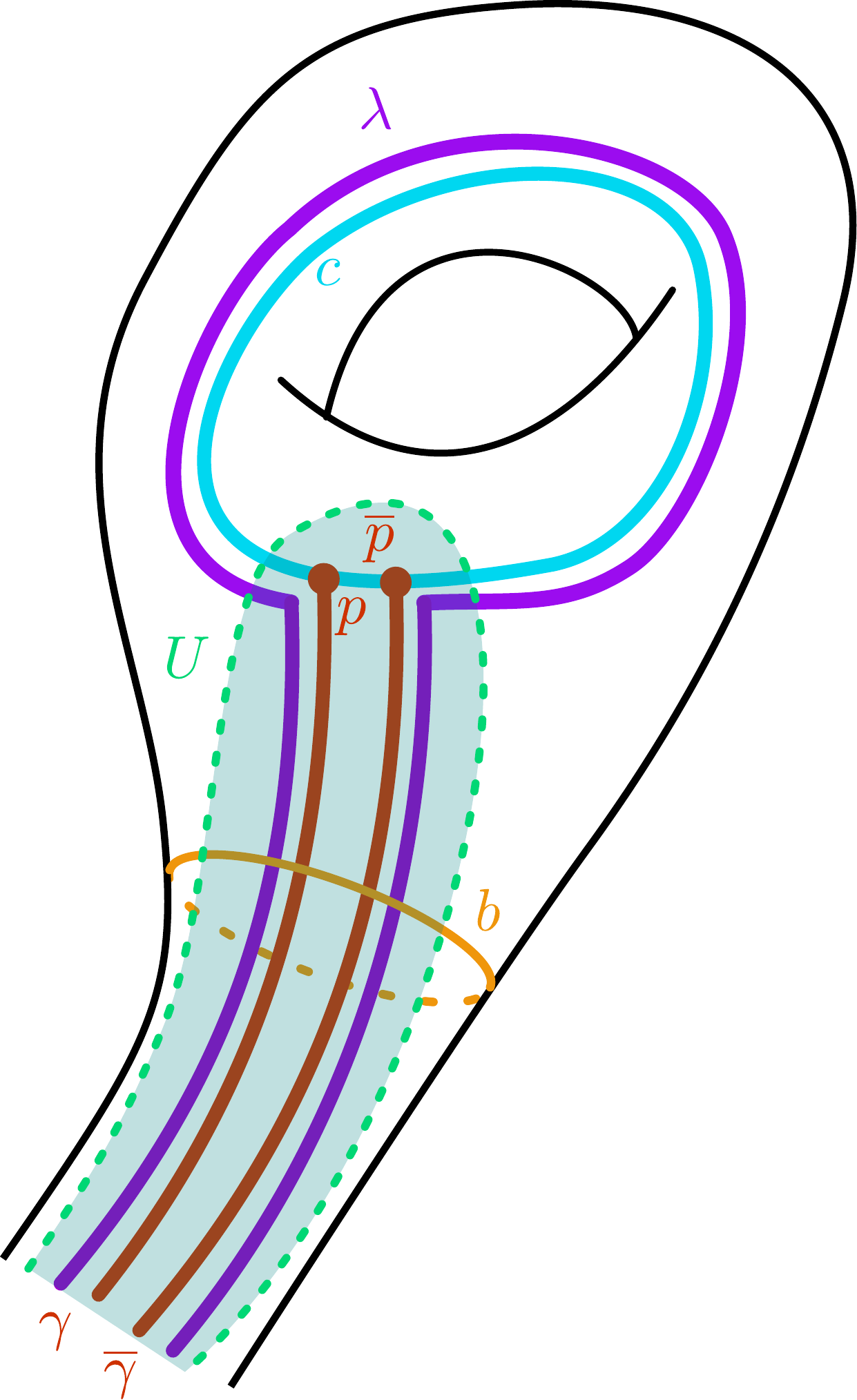}
    \end{subfigure}
    ~ 
    \begin{subfigure}[t]{0.25\textwidth}
        \centering
        \includegraphics[width=0.9\textwidth]{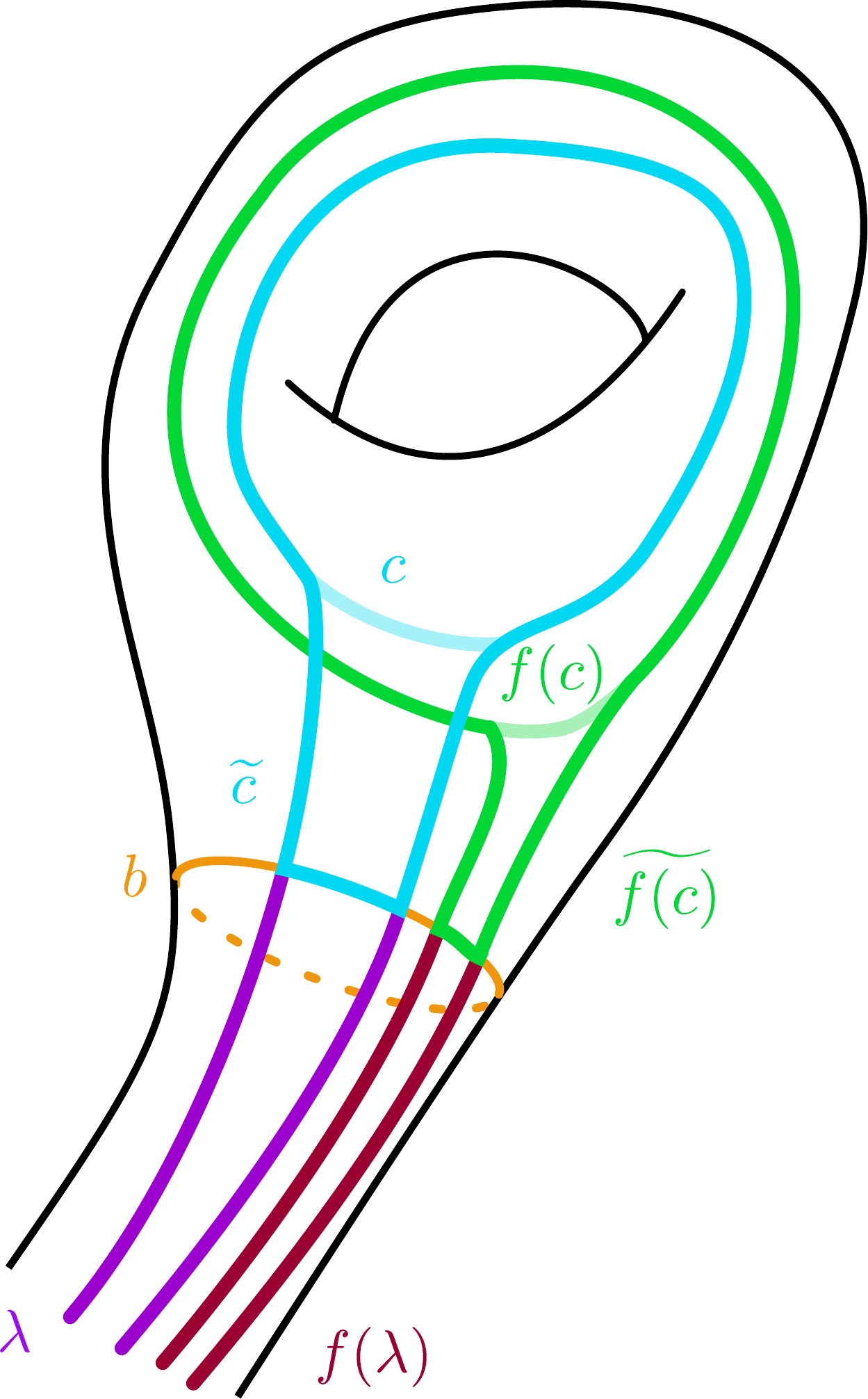}

    \end{subfigure}
        \caption{A diagram showing the main ideas in Lemma \ref{lemma:fix-arc-fix-curve-alex-ref-pls}.}
        \label{fig:alex-method}
\end{figure}

Let $c$ be a curve in $\Sigma$, and let $\gamma$ be a path from $e$ to a point $p$ on $c$ such that $|c \cap \gamma| = 1$.  Choose a tubular neighborhood $U$ of $\gamma$ such that $|\partial U \cap c| = 2$. Let $\overline{\gamma}$ be a path in $U$ from a point $\overline{p}$ on $c$ to $e$ such that $\gamma \cap \overline{\gamma} = \emptyset$ and $|c \cap \overline{\gamma}| = 1$. Define $\lambda \in \mathcal{L}_e$ to be the arc that begins at $e$, follows $\gamma$ from $e$ to $p$, then follows the connected component of $c\setminus \{p,\overline{p}\}$ not contained in $U$, and finally runs back to $e$ along $\overline{\gamma}$. See Figure \ref{fig:alex-method}. Since $c \cup f(c)$ is compact, we can choose a separating curve $b$ such that one component of $\Sigma \setminus b$ contains $e$ and the other contains $c \cup f(c)$. We also require that $b$ intersects $\gamma$, $\overline{\gamma}$, $f(\gamma)$, and $f(\overline{\gamma})$ at exactly one point each.

We now define an isotopy from $c$ to $f(c)$ in terms of the isotopy $I$ from $\lambda$ to $f(\lambda)$ given by the hypothesis. First, observe that $c$ is isotopic to a curve $\widetilde{c}$ contained in $\lambda \cup b$. Likewise, $f(c)$ is isotopic to a curve $\widetilde{f(c)}$ contained in $f(\lambda) \cup b$. $I$ extends to an isotopy between $\widetilde{c}$ and $\widetilde{f(c)}$ by translating $\widetilde{c} \cap b$ along $b$.
\end{proof}

\begin{figure}[ht]
    \centering
    \includegraphics[width=.3\textwidth]{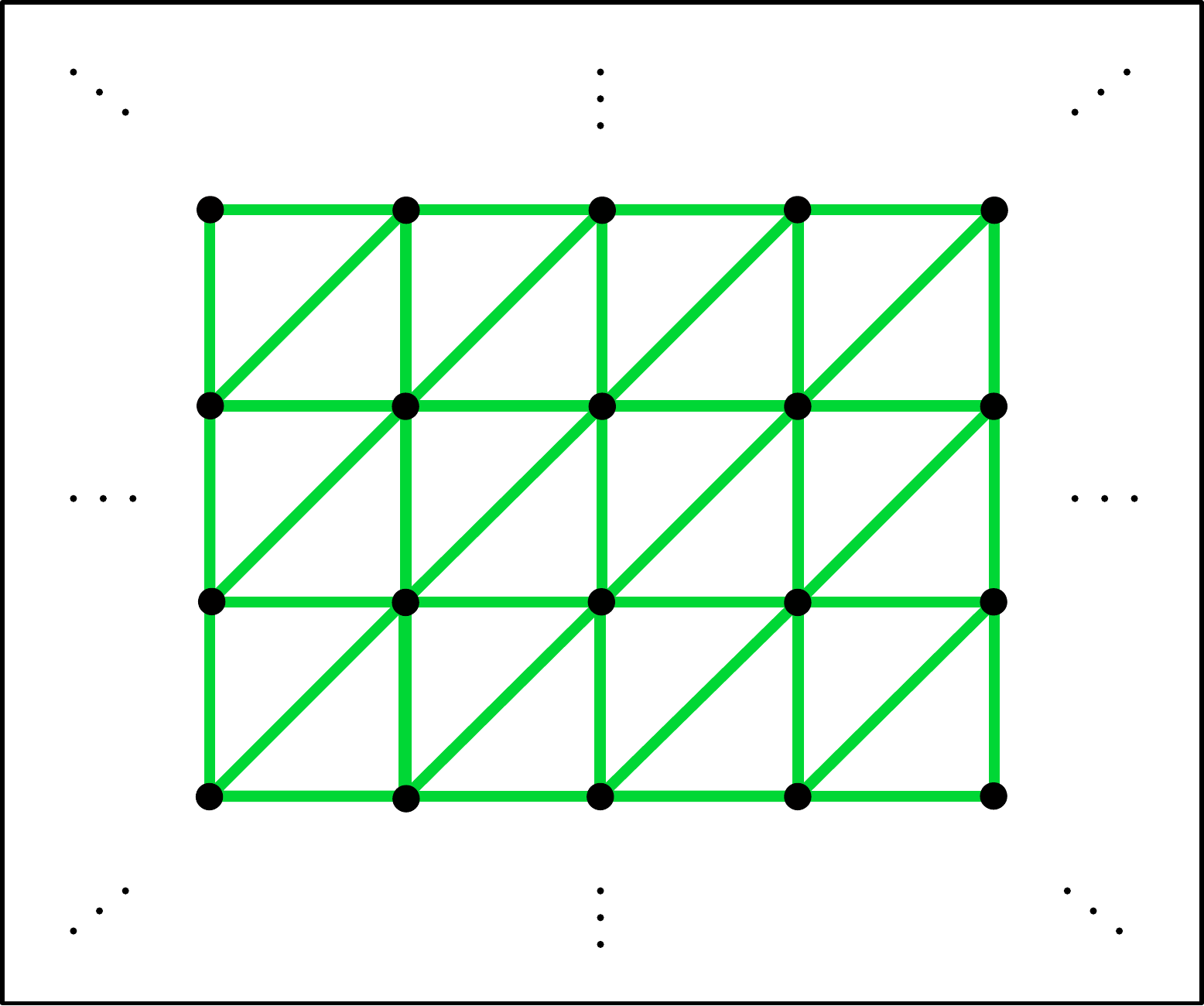}
    \caption{The standard triangulation of the flute surface.}
    \label{fig:flute_triangulations}
\end{figure}

\begin{remark}
Initially, one might think that if a mapping class fixes a particular triangulation, it must be the identity. This is not always the case, however. For example, consider the triangulation $T$ of the flute surface depicted in Figure \ref{fig:flute_triangulations}. Then, the mapping class that shifts every point on the plane to the right one unit fixes $T$ (not as a set, but as a triangulation). Note that this homeomorphism is not isotopic to the identity. In other words, arbitrary triangulations do not form Alexander systems. This implies that generally, $\EMod(\Sigma)$ does not act freely on $\cF(\Sigma)$.
\end{remark}

The following technical lemma will be used in the proof of Lemma \ref{lemma:no-map-one-comp-nontrivial} to glean information about the action of $\cF(\Sigma)$ on individual arcs from its action on triangulations. 
\begin{lemma}\label{complete-one-but-not-other} 

Let $\alpha$ and $\beta$ be distinct ideal arcs on $\Sigma$, such that $\alpha$ is not a petal arc for $\beta$. Then, there exist ideal triangulations $T_1$, $T_2$ in distinct connected components of the flip graph that each contain $\alpha$ but not $\beta$.
\end{lemma}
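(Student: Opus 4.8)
The plan is to build the two triangulations in two stages: first produce a single triangulation $T_1$ containing $\alpha$ but not $\beta$, and then perturb it far away from $\alpha$ and $\beta$ by an infinite multitwist to obtain a second triangulation $T_2$ with the same local behavior near $\alpha \cup \beta$ but lying in a different connected component. The separation into distinct components will come entirely from Lemma~\ref{lemma:finite-int-same-cc}: it suffices to arrange that the arcs of $T_2$ have unbounded intersection with $T_1$.

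For the first stage, note that a single essential arc $\alpha$ satisfies $i(\gamma,\alpha)<\infty$ for every simple closed curve $\gamma$, since $\gamma$ is compact and $\alpha$ is proper; hence by Lemma~\ref{lemma:complete-multiarc-to-tri} the multiarc $\{\alpha\}$ admits a completion $T_0$. If $\beta \notin T_0$ we are done. Otherwise $\beta \in T_0$ (in particular $i(\alpha,\beta)=0$, since the arcs of $T_0$ are disjoint from $\alpha$), and I would remove $\beta$ by flips while keeping $\alpha$. If $\beta$ bounds two distinct triangles of $T_0$, flipping $\beta$ (a simultaneous flip on the single arc $\beta$) replaces it by the other diagonal of the surrounding quadrilateral; since $\alpha\neq\beta$ this leaves $\alpha$ in the triangulation and removes $\beta$, giving $T_1$.

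The delicate case, which I expect to be the main obstacle, is when $\beta$ bounds only one triangle $\Delta$ of $T_0$, i.e. $\beta$ is the self-folded (doubled) side and its petal arc is some loop $\ell$. Here the hypothesis that $\alpha$ is \emph{not} a petal arc for $\beta$ is exactly what is needed: it guarantees $\ell \neq \alpha$. I would then first flip the loop $\ell$, which unfolds the self-folded triangle so that $\beta$ becomes adjacent to two distinct triangles; because $\ell \neq \alpha$ and $\beta \neq \alpha$, the arc $\alpha$ survives this flip, and a second flip removes $\beta$ as in the previous case. Thus in all cases we obtain $T_1$ with $\alpha \in T_1$ and $\beta \notin T_1$; the work here is to verify carefully that the loop-then-radius flip behaves as described and that the excluded petal configuration is the only genuine obstruction to flipping $\beta$ off.

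For the second stage, choose a family of pairwise disjoint essential simple closed curves $\{c_i\}_{i\ge 1}$ that escape every compact set and are disjoint from $\alpha \cup \beta$; such a family exists because $\alpha \cup \beta$ lies in a finite-type subsurface while $\Sigma$ is infinite-type. Fix pairwise disjoint, locally finite, compact annular neighborhoods $A_i$ of the $c_i$, and let $T_2$ be obtained from $T_1$ by applying $n_i$ Dehn twists about $c_i$ supported in $A_i$, with $n_i \to \infty$. Each modification is supported in a compact annulus, so $T_2$ is again a locally finite ideal triangulation; it agrees with $T_1$ outside $\bigcup_i A_i$, whence $\alpha \in T_2$ and $\beta \notin T_2$ (both being disjoint from every $A_i$). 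Finally, since $T_1$ fills $\Sigma$ each $c_i$ meets $T_1$, so an arc of $T_2$ crossing $A_i$ intersects $T_1$ on the order of $n_i$ times; as $n_i \to \infty$ there is no uniform bound on intersections, and Lemma~\ref{lemma:finite-int-same-cc} places $T_1$ and $T_2$ in distinct components. The remaining care in this stage is to confirm that the infinite, locally finite multitwist yields a bona fide ideal triangulation and that the intersection growth is genuinely unbounded.
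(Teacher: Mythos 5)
Your proposal is correct and follows essentially the same route as the paper: complete $\alpha$ to a triangulation, flip $\beta$ off (flipping the petal loop first in the self-folded case, where the hypothesis guarantees the loop is not $\alpha$), and then move to a new connected component by twisting along infinitely many disjoint curves in $\Sigma \setminus (\alpha \cup \beta)$. The only difference is that the paper delegates your second stage to the procedure of Corollary 4.1 of Fossas--Parlier rather than spelling out the unbounded multitwist explicitly.
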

\begin{proof}
We first construct $T_1$. If $i(\alpha, \beta) > 0$, then any completion of $\alpha$ can be taken as $T_1$. Otherwise, by Lemma \ref{lemma:complete-multiarc-to-tri}, arcs $\alpha$ and $\beta$ can be completed into a triangulation $T$. If $\beta$ is flippable, we flip $\beta$ to get a triangulation $T'$ that contains $\alpha$ but not $\beta$.
Otherwise, $\beta$ is unflippable, which implies that the triangles bordering both sides of the arcs are the same, and that $\beta$ is contained within a petal formed by an arc $\gamma$. Then, $\gamma$ must border two distinct triangles, so $\gamma$ is flippable. After flipping $\gamma$, $\beta$ will thus be flippable, and flipping $\beta$ will result in a triangulation $T_1$ that contains $\alpha$ but not $\beta$.

Next, we construct $T_2$. Since $\Sigma' = \Sigma \setminus (\alpha \cup \beta)$ is still an infinite-type surface, we can choose a countable collection of disjoint simple closed curves $\delta_i$ on $\Sigma'$. Note that each $\delta_i$, when thought of as a curve on $\Sigma$, is disjoint from $\alpha$ and $\beta$. 
We apply the procedure described in Corollary 4.1 of \cite{fossas-parlier} to $T_1$ using the collection $\{\delta_i\}$ to obtain a triangulation $T_2$ in a distinct connected component of $\cF(\Sigma)$, containing $\alpha$ but not $\beta$.
\end{proof}

\begin{remark}
Note that the procedure described above can create uncountably many triangulations, each in a distinct connected component and containing $\alpha$ but not $\beta$. This procedure is done precisely in \cite{fossas-parlier}. However, it is not true that every connected component contains such a triangulation. Consider the triangulation $T$ of the flute surface shown in Figure \ref{fig:flute_triangulations}. We claim that no triangulation in the connected component of $T$ contains an arc $\alpha$ with an endpoint at the non-isolated end. To see why this is true, observe that none of the arcs of $T$ connect to the non-isolated end, so this end cannot be the vertex of a bounding quadrilateral of an arc. Hence, a simultaneous flip cannot connect any new arcs to this end. Since the resulting triangulation will retain this property, we conclude that no triangulation containing $\alpha$ is reachable from $T$ by a series of simultaneous flips. This implies that the modular flip graph \cite{disarlo-parlier} is disconnected for certain infinite-type surfaces, a result which is of interest of its own right.
\end{remark}

The next result shows that mapping classes are determined by their action on the flip graph. In addition, the weaker hypothesis allows us to determine the action of a mapping class on one connected component of $\cF(\Sigma)$ by its action on the other components. We use this in the proof of Theorem \ref{thm:isomorphic-to-subgroup} to show that a particular automorphism of $\cF(\Sigma)$ is not induced by a mapping class.
\begin{lemma}\label{lemma:no-map-one-comp-nontrivial}
If $f \in \EMod(\Sigma)$ acts trivially on all except for one connected component of $\cF(\Sigma)$, then $f$ is the identity in $\EMod(\Sigma)$.
\end{lemma}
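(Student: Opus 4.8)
The plan is to deduce that the hypothesis forces $f$ to fix the isotopy class of \emph{every} ideal arc, after which the Alexander Method for Ideal Arcs (Lemma \ref{lemma:fix-arc-fix-curve-alex-ref-pls}) finishes the job: if $f(\lambda)\simeq\lambda$ for every loop $\lambda\in\mathcal{L}_e$ at some end $e$, then $f\simeq\Id_\Sigma$ and hence $f$ is trivial in $\EMod(\Sigma)$. Let $C_0$ be the one component on which $f$ may act nontrivially. I would first record that $f$ must fix $C_0$ setwise (a vertex of $C_0$ cannot be sent to a fixed vertex of another component without violating injectivity), so that $f^{-1}$ also acts trivially on every component other than $C_0$. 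I would also stress the subtlety that $f(T)=T$ \emph{as a vertex}, i.e.\ as an isotopy class of triangulation, does not mean $f$ fixes each arc of $T$ individually --- precisely the flute-surface phenomenon noted above --- so the argument must locate triangulations that separate $\alpha$ from its image.

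The core is a proof by contradiction: suppose $f(\alpha)=\beta$ with $\beta\neq\alpha$ for some arc $\alpha$. If $\alpha$ is not a petal arc for $\beta$, then Lemma \ref{complete-one-but-not-other} furnishes triangulations in two distinct components each containing $\alpha$ but not $\beta$; at least one of them, say $T$, lies off $C_0$ and is therefore fixed by $f$. Since the arc set of $f(T)=T$ coincides with that of $T$ and $\alpha\in T$, the arc $\beta=f(\alpha)$ must lie in $T$, contradicting $\beta\notin T$. If instead $\alpha$ \emph{is} a petal arc for $\beta$, I would run the same argument for $f^{-1}$ and the ordered pair $(\beta,\alpha)$: since $f^{-1}(\beta)=\alpha$ and $f^{-1}$ is also trivial off $C_0$, provided $\beta$ is not a petal arc for $\alpha$ we again obtain a triangulation off $C_0$ containing $\beta$ but not $\alpha$ and reach the same contradiction.

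The two arguments can both fail only in the ``double petal'' case, where $\alpha$ and $\beta$ are each a simple loop cutting off a once-punctured monogon that contains the other arc as its radial spoke. This is the one place needing genuine care, and I expect it to be the main obstacle to phrase cleanly; however it is impossible on topological grounds, since a loop lying inside the monogon bounded by $\alpha$ bounds a sub-disk of that monogon and hence cannot in turn enclose $\alpha$. Having eliminated every case, we conclude $f(\alpha)=\alpha$ for all arcs $\alpha$; in particular every loop at every end is fixed, verifying hypothesis (i) of Lemma \ref{lemma:fix-arc-fix-curve-alex-ref-pls} and yielding that $f$ is the identity in $\EMod(\Sigma)$.
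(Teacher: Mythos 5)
Your proposal is correct and follows essentially the same route as the paper: show that $f$ fixes every isotopy class of arc by combining Lemma \ref{complete-one-but-not-other} with the triviality of $f$ off a single component, then invoke Lemma \ref{lemma:fix-arc-fix-curve-alex-ref-pls}. The only divergence is the petal sub-case, which the paper dispatches in one line by observing that a petal arc $\alpha$ for $\beta$ has a single distinct endpoint while $\beta$ has two, so no homeomorphism carries $\alpha$ to $\beta$; this also subsumes your ``double petal'' discussion and avoids the detour through $f^{-1}$.
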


\begin{proof}
Let $\alpha$ an arc in $\Sigma$. We show that $f(\alpha)$ cannot be $\beta$ for any arc $\beta \neq \alpha$, which implies that $f(\alpha) = \alpha$. Let $\beta \neq \alpha$ be arbitrary.

First, suppose that $\alpha$ is a petal arc of $\beta$. We observe that $\beta$ must have two distinct endpoints, while $\alpha$ only has one, so $f(\alpha)$ cannot be $\beta$. In the case where $\alpha$ is not a petal arc of $\beta$, Lemma \ref{complete-one-but-not-other} guarantees the existence of  triangulations $T_1$, $T_2$ containing $\alpha$ but not $\beta$ such that $T_1$ and $T_2$ are in distinct connected components of $\cF(\Sigma)$. Then, $f$ must act trivially on either $T_1$ or $T_2$, so $f(\alpha) \neq \beta$. Thus, $f(\alpha) = \alpha$, and by Lemma \ref{lemma:fix-arc-fix-curve-alex-ref-pls}, $f$ is the identity in $\EMod(\Sigma)$.
\end{proof}

\begin{corollary}\label{corollary:faithful-action}
The action of $\EMod(\Sigma)$ on $\cF(\Sigma)$ is faithful. In other words, $\EMod(\Sigma)$ embeds into $\Aut(\cF(\Sigma)).$
\end{corollary}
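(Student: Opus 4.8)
The plan is to derive Corollary~\ref{corollary:faithful-action} directly from Lemma~\ref{lemma:no-map-one-comp-nontrivial}, since the corollary is essentially the special case of that lemma in which $f$ acts trivially on \emph{every} component. Recall that a group action is faithful precisely when its kernel is trivial, i.e., the only element acting as the identity automorphism is the identity of the group. So I would take an arbitrary $f \in \EMod(\Sigma)$ lying in the kernel of the action and show that $f$ is the identity in $\EMod(\Sigma)$.

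Concretely, suppose $f \in \EMod(\Sigma)$ induces the trivial automorphism of $\cF(\Sigma)$. Then $f$ fixes every vertex of $\cF(\Sigma)$, that is, $f(T) = T$ for every ideal triangulation $T$. In particular, $f$ acts trivially on each connected component of $\cF(\Sigma)$. This certainly implies that $f$ acts trivially on all but one connected component, so the hypothesis of Lemma~\ref{lemma:no-map-one-comp-nontrivial} is satisfied (in fact it holds vacuously strongly, as $f$ is trivial on \emph{all} components, hence on all but any chosen one). Applying Lemma~\ref{lemma:no-map-one-comp-nontrivial} immediately yields that $f$ is the identity in $\EMod(\Sigma)$. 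Therefore the kernel of the action is trivial, and the action is faithful. Faithfulness of the action is exactly the statement that the induced homomorphism $\EMod(\Sigma) \to \Aut(\cF(\Sigma))$ is injective, so $\EMod(\Sigma)$ embeds into $\Aut(\cF(\Sigma))$.

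There is no real obstacle here: the corollary is a formal consequence of the preceding lemma, and the only thing to verify is that ``acts trivially on all of $\cF(\Sigma)$'' is a stronger condition than ``acts trivially on all but one connected component,'' which is immediate. The one point worth stating carefully is that the map $\EMod(\Sigma) \to \Aut(\cF(\Sigma))$ is a group homomorphism, which was already observed in the discussion preceding the current section (the action commutes with flips, so each mapping class induces an automorphism). Given that this is a homomorphism, injectivity is equivalent to triviality of the kernel, and the kernel-triviality is exactly what Lemma~\ref{lemma:no-map-one-comp-nontrivial} delivers. Thus the entire proof amounts to a one-line invocation of the lemma together with the standard equivalence between faithfulness and injectivity of the induced homomorphism.
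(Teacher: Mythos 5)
Your proof is correct and matches the paper's intent exactly: the corollary is stated without proof precisely because it is the immediate special case of Lemma~\ref{lemma:no-map-one-comp-nontrivial} in which $f$ acts trivially on every component, and your derivation (kernel triviality via the lemma, plus the previously observed fact that the action gives a homomorphism to $\Aut(\cF(\Sigma))$) is the intended one.
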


The preceding result establishes that the flip graph has at least as many symmetries as the surface. But since so little is known about the graph structure of $\cF(\Sigma)$, it is difficult to conceive of automorphisms that are not induced by specific mapping classes. The following lemma improves the situation to some extent, allowing us to construct automorphisms of individual connected components via compactly-supported mapping classes.
\begin{lemma}\label{lemma:comp-supp-aut}
Let $f$ be a compactly-supported mapping class. Then, $f$ induces an automorphism on every connected component of $\cF(\Sigma)$. In particular, 
$f$ fixes the connected components of $\cF(\Sigma)$.
\end{lemma}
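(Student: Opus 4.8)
The plan is to show that a compactly-supported mapping class $f$ preserves each connected component of $\cF(\Sigma)$ by appealing to the combinatorial criterion for connectivity given in Lemma~\ref{lemma:finite-int-same-cc}. The key observation is that having compact support bounds how much $f$ can distort any triangulation: intersection numbers between a triangulation $T$ and its image $f(T)$ should be uniformly bounded, which is exactly the condition needed to place $T$ and $f(T)$ in the same component.

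First I would fix a connected component $\mathcal{C}$ of $\cF(\Sigma)$ and an arbitrary triangulation $T$ in $\mathcal{C}$. Since $f$ is compactly supported, there is a compact subsurface $K$ such that $f$ restricts to the identity outside $K$. Any arc $\alpha$ of $T$ therefore agrees with $f(\alpha)$ outside $K$, so $\alpha$ and $f(\alpha)$ can differ only within $K$. I would then argue that for every arc $\alpha$ of $T$ the intersection number $i(\alpha, f(T))$ (and symmetrically $i(f(\alpha), T)$) is bounded by a constant depending only on $K$ and $T$, but \emph{not} on the particular arc $\alpha$. The natural bound is something like the total number of arcs of $T$ meeting $K$ times the maximal number of crossings $f$ can introduce inside $K$; since $T$ is locally finite and $K$ is compact, only finitely many arcs of $T$ enter $K$, and each such arc meets the support in a controlled way.

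Having produced a uniform bound $K' \geq 0$ with $i(\alpha, f(T)) \leq K'$ and $i(f(\alpha), T) \leq K'$ for all arcs $\alpha$ of $T$, I would invoke Lemma~\ref{lemma:finite-int-same-cc} directly: this is precisely the hypothesis guaranteeing that $T$ and $f(T)$ lie in the same connected component. Since $T \in \mathcal{C}$ was arbitrary, $f$ maps $\mathcal{C}$ into itself; applying the same reasoning to $f^{-1}$ (which is also compactly supported) shows $f$ maps $\mathcal{C}$ onto $\mathcal{C}$, so $f$ restricts to an automorphism of $\mathcal{C}$ and fixes the component set-wise. Because $\mathcal{C}$ was arbitrary, $f$ fixes every connected component of $\cF(\Sigma)$.

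The main obstacle I anticipate is making the uniform intersection bound genuinely uniform over \emph{all} arcs of $T$ simultaneously, including arcs of $T$ that lie entirely outside $K$. For arcs disjoint from $K$ the bound is immediate, since $f$ fixes them pointwise and hence $i(\alpha, f(T)) = i(\alpha, T) = 0$ after isotopy; the subtlety is that an arc $\alpha$ outside $K$ might still cross arcs of $T$ that pass through $K$, so I must be careful that I am bounding crossings with $f(T)$ rather than with $T$. The cleanest way to handle this is to note that $f(T)$ differs from $T$ only inside $K$, so any crossing of $\alpha$ with $f(T)$ that is not already a crossing with $T$ must occur inside $K$, and only the finitely many arcs of $T$ meeting $K$ can be responsible; this caps the number of new crossings independently of $\alpha$. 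Once this localization argument is stated carefully, the rest of the proof is a direct application of the connectivity criterion.
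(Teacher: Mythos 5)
Your proposal is correct and follows essentially the same route as the paper: both arguments localize all intersections between $T$ and $f(T)$ to the compact support, use local finiteness of the triangulation to see that only finitely many arcs enter that region, extract a uniform bound on intersection numbers, and then apply Lemma~\ref{lemma:finite-int-same-cc}. Your extra care about surjectivity via $f^{-1}$ is a fine (and slightly more explicit) way to close the argument than the paper's appeal to $f$ being a graph automorphism.
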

\begin{proof}
Let $T \in \cF(\Sigma)$ be arbitrary. We will show that $T$ and $f(T)$ must lie in the same connected component of $\cF(\Sigma)$. Since $f$ has compact support, there exists a finite-type subsurface $\Sigma'$ of $\Sigma$ containing every point $x 
\in T \subset \Sigma$ for which $f(x) \neq x$. It follows that every transverse intersection of arcs of $T$ and arcs of $f(T)$ is contained within $\Sigma'$.

Only finitely many arcs of $T$ and $f(T)$ can pass through $\Sigma'$, because 
otherwise, the arcs of $T$ or $f(T)$ would accumulate. Let 
$\mc{A} \subset \Sigma'$ be this set of arcs, and let $K = \max_{\alpha,\beta\in\mc{A}} |\alpha\cap \beta|$. This maximum exists, because intersection points between arcs in $\mc{A}$ are discrete and contained in $\Sigma'$. Note that for any $\alpha \in T$, we have $i(\alpha,f(T)) \le K |\mc{A}|$, and for any $\beta \in f(T)$, we have $i(\beta,T) \le K|\mc{A}|$. Thus, by Lemma \ref{lemma:finite-int-same-cc}, $T$ and $f(T)$ lie in the same connected component of $\cF(\Sigma)$.

Since $f$ induces an automorphism on $\cF(\Sigma)$, it must restrict to an automorphism on each connected component of $\cF(\Sigma)$.
\end{proof}

We are now ready to prove Theorem \ref{thm:isomorphic-to-subgroup}.

\begin{proof}[Proof of Theorem \ref{thm:isomorphic-to-subgroup}]
Let $f$ be a nontrivial mapping class with compact support, and let $\Gamma$ be a connected component of $\cF(\Sigma)$ that $f$ acts nontrivially on. Note that $f$ induces an automorphism on $\Gamma$ by Lemma \ref{lemma:comp-supp-aut}, which we also denote $f$. We define an automorphism $\overline{f}$ of $\cF(\Sigma)$ by
\[\overline{f}(T) = \begin{cases} 
    f(T) & T \in \Gamma \\
    T & T \in \cF(\Sigma) \setminus \Gamma
                    \end{cases}.\]
By Lemma \ref{lemma:no-map-one-comp-nontrivial}, a mapping class inducing $\overline{f}$ would necessarily act trivially on $\cF(\Sigma)$, which $\overline{f}$ does not. Thus, $\overline{f}$ cannot be induced by a mapping class. Together with Corollary \ref{corollary:faithful-action}, this establishes that the natural action of $\EMod(\Sigma)$ corresponds to a homomorphism $\EMod(\Sigma) \to \Aut(\cF(\Sigma))$ that is injective but not surjective.
\end{proof}

\section{Discussion}
In this paper, we constructed an automorphism of the flip graph not induced by a mapping class. In other words, there are graph isomorphisms between flip graphs that are not induced by homeomorphisms of the corresponding surfaces. A natural follow-up question to ask is whether simultaneous flip graphs even determine a surface:

\begin{question}
If $\cF(\Sigma)$ and $\cF(\Sigma')$ are isomorphic as graphs, does this mean that $\Sigma$ and 
$\Sigma'$ are homeomorphic?
\end{question}

The method we used in this paper considers how automorphisms of the flip graph on distinct connected components may be incompatible with mapping classes. However, we might try to restrict our attention to a single connected component of the flip graph. We can ask whether every automorphism of a connected component is induced by a mapping class:

\begin{question}
For a given connected component $\Gamma$ of the flip graph $\cF(\Sigma)$, is $\Aut(\Gamma)$ isomorphic to
$\EMod(\Sigma)$?
\end{question}

\bibliography{FlipGraphsIvanov}
\end{document}